\documentclass[reqno]{amsart}

\title{ON GENERALIZATIONS OF BOEHMIAN SPACE AND HARTLEY TRANSFORM}
\author{C. GANESAN}
\address{Department of Mathematics, V. H. N. S. N. College, Virudhunagar - 626 001, India.}
\email{c.ganesan28@yahoo.com}

\author{R. ROOPKUMAR$^\ast$}
\address{Department of Mathematics, Central University of Tamil Nadu, Thiruvarur - 610101, India.}
\email{roopkumarr@rediffmail.com}
\thanks{$^\ast$Corresponding author}

\usepackage{amsmath,amssymb,mathrsfs}
\theoremstyle{plain}

\newtheorem{defn}{Definition}[section]
\newtheorem{thm}[defn]{Theorem}
\newtheorem{lem}[defn]{Lemma}

\newtheorem{exa}[defn]{Example}
\numberwithin{equation}{section}

\date{October, 06, 2016}
\keywords{Boehmians, convolution, Hartley transform}
\subjclass[2010]{44A15, 44A35, 44A40}

\begin{document}

\begin{abstract}
Boehmians are quotients of sequences which are constructed by using a set of axioms. In particular, one of these axioms states that the set $S$ from which the {\it denominator} sequences are formed should be a commutative semigroup with respect to a binary operation. In this paper, we introduce a generalization of abstract Boehmian space, called  generalized Boehmian space or $G$-Boehmian space,  in which $S$ is not necessarily a commutative semigroup. Next, we provide an example of a $G$-Boehmian space and we discuss an extension of the Hartley transform on it.  Finally, we compare the Hartley transform on Boehmians introduced in this paper with the existing works on Hartley transform on Boehmians. 
\end{abstract}
\maketitle

\section{Introduction}
Motivated by the Boehme's regular operators \cite{tkb}, a generalized function space called Boehmian space is introduced by J. Mikusi\'{n}ski and P. Mikusi\'{n}ski \cite{jmpm} and two notions of convergence called $\delta$-convergence and $\Delta$-convergence on a Boehmian space are introduced  in \cite{mcb}. In general, an abstract Boehmian space is constructed by using a suitable topological vector space $\Gamma$, a subset $S$ of $\Gamma$, $\star:\Gamma\times S\to \Gamma$ and a collection $\Delta$ of sequences satisfying some axioms. In \cite{mfob}, the abstract Boehmian space is generalized by replacing $S$ with a commutative semi-group in such a way that $S$ is not even comparable with $\Gamma$ and the binary operation on $S$ need not be the same as $\star$. Using this generalization of Boehmians, a lot of Boehmian spaces have been constructed for extending various integral transforms. To mention a few recent works on Boehmians, we refer to \cite{ns1,ns2,rrr,rgt,rst,ctb}. There is yet another generalization of Boehmians called generalized quotients or pseudoquotients \cite{gq1,gq2,gpq}. 

 According to the earlier constructions, we note that $S$ is assumed to be a commutative semi-group either with respect to the restriction of $\star$ or with respect to the binary operation defined on $S$. In this paper, we provide another generalization of an abstract Boehmian space, in which $S$ is not necessarily a commutative semi-group. We shall call such Boehmian space a generalized Boehmian space or simply a $G$-Boehmian space and we also provide a concrete example of a $G$-Boehmian space. At this juncture, we point out that in a recent interesting  paper on pseudoquotients \cite{kmpq}, the commutativity of $S$ is relaxed by Ore type condition, which is entirely different from the generalization discussed in this paper.

On the other hand, Hartley \cite{hart} introduced a Fourier like transform and it is later called Hartley transform. This transform founds a lot of applications in signal processing. In view of extending the domain of the Hartley transform, there are a very few works in the literature. In particular, in the context of Boehmians, we could find a few papers which are discussing the Hartley transform and unfortunately each of them is having some shortcomings.  Briefly writing, the Hartley transform is defined by employing wrong convolution theorems in \cite{lbd, alo12,alo13}, and  the domains of the Hartley transform in \cite{alo11,alo12} are not suitable. 

In this paper, we extend the Hartley transform to a suitable $G$-Boehmian space and study its properties.

\section{Preliminaries}
\subsection{Boehmians} \label{abd}
 From \cite{mcb}, we briefly recall the construction of a  Boehmian space $\mathscr{B}=\mathscr{B}(\Gamma,S, \star, \Delta)$, where $\Gamma$  is a topological vector space over $\mathbb{C}$, $S \subseteq \Gamma$,  $\star :\Gamma\times S\to \Gamma$ satisfies the following conditions:
\begin{enumerate}
\item[$(A_1)$] $(g_1+g_2)\star s= g_1\star s + g_2\star s$, $\forall g_1,g_2\in \Gamma$ and $\forall s\in S$,
\item[$(A_2)$] $(cg)\star s= c(g\star s)$, $\forall c\in \mathbb{C}$, $\forall g\in \Gamma$ and $\forall s\in S$,
\item[$(A_3)$] $g\star(s\star t) = (g\star s)\star t$, $\forall g\in \Gamma$ and $\forall s,t\in S$,
\item[$(A_4)$] $s\star t=t\star s,\ \forall s,t\in S$,
\item[$(A_c)$] If $g_n\to g$ as $n\to \infty$ in $\Gamma$ and $s\in S$, then $g_n\star s\to g\star s$ as $n\to\infty$ in $\Gamma$,
\end{enumerate}
and  $\Delta$ is a collection of sequences from $S$ with the following properties:
\begin{enumerate}
\item[$(\Delta_1)$] If $(s_n),(t_n)\in \Delta$, then $(s_n\star t_n)\in \Delta$,
\item[$(\Delta_2)$] If $g\in \Gamma$ and $(s_n)\in \Delta$, then $g\star s_n\to g$ as $n\to \infty$ in $\Gamma$.
\end{enumerate}

We call a pair $((g_n),(s_n))$  of sequences satisfying the conditions $g_n \in \Gamma,\, \forall n\in \mathbb{N}$, $(s_n)\in \Delta$ and $$g_n\star s_m=g_m\star s_n,\ \forall m,n \in \mathbb{N},$$ 
 a quotient and is denoted by $\frac{g_n}{s_n}$.  The equivalence class $\left[\frac{g_n}{s_n}\right]$ containing $\frac{g_n}{s_n}$ induced by the equivalence relation $\sim$ defined on  the collection of all quotients by 
\begin{equation} \label{rel}
\frac{g_n}{s_n} \sim \frac{h_n}{t_n} \mbox{ if } g_n\star t_m=h_m\star s_n,\ \forall m,n \in \mathbb{N}
\end{equation}
is called a Boehmian and the collection $\mathscr{B}$  of all Boehmians is a vector space with respect to the addition and scalar multiplication defined as follows.
$$\left[\frac{g_n}{s_n}\right] + \left[\frac{h_n}{t_n}\right] = \left[\frac{g_n\star t_n +h_n\star s_n}{s_n\star t_n}\right], \ c\left[\frac{g_n}{s_n}\right] = \left[\frac{cg_n}{s_n}\right].$$
Every member $g\in \Gamma$ can be uniquely identified as a member of $ \mathscr{B}$ by $\left[\frac{g\star s_n}{s_n}\right]$, where $(s_n)\in \Delta$ is arbitrary and  the operation $\star$ is also extended to $ \mathscr{B}\times S$ by $\left[\frac{g_n}{\phi_n}\right]\star t= \left[\frac{g_n\star t}{\phi_n}\right]$.
 There are two notions of convergence on $\mathscr{B}$ namely $\delta$-convergence and $\Delta$-convergence which are defined as follows.
\begin{defn}\cite[$\delta$-convergence]{mcb}
We say that $X_m\stackrel{\delta}{\to} X$ as $m\to \infty$ in $\mathscr{B}$, if there exists  $(s_n)\in \Delta$ such that $X_m \star \delta_n, X\star \delta_n \in \Gamma$, $\forall m,n\in \mathbb{N}$ and for each 
$n\in \mathbb{N}$,  $X_m \star \delta_n\to X \star \delta_n$ as $m\to \infty$ in $\Gamma$.
\end{defn}
\begin{defn}\cite[$\Delta$-convergence]{mcb}
We say that $X_m\stackrel{\Delta}{\to} X$ as $m\to \infty$ in $\mathscr{B}$, if there exists $(s_n)\in \Delta$ such that $(X_m-X) \star \delta_m \in \Gamma$, $\forall m \in \mathbb{N}$ and 
$(X_m -X) \star \delta_m\to 0$ as $m\to \infty$ in $\Gamma$.
\end{defn}

\subsection{Hartley transform}
For an arbitrary integrable function $f$, the Hartley transform  was defined by
\begin{equation}\label{dht}
[\mathcal{H}(f)](t) = \frac{1}{\sqrt{2\pi}} \int\limits_{-\infty}^{\infty} f(x) [\cos xt + \sin xt] dx,\ \forall t\in \mathbb{R}
\end{equation} and its inverse is obtained from the formula $\mathcal{H}[\mathcal{H}(f)] = f$, whenever $\mathcal{H}(f)\in \mathscr{L}^1(\mathbb{R})$. For more details on the classical theory of Hartley transform, we refer to \cite{rnb}.

The Hartley transform is one of the integral transforms which is closely related to Fourier transform in the following sense.
$$
\mathcal{F}(f)= \frac{\mathcal{H}(f)+\mathcal{H}(-f)}{2}+i \frac{\mathcal{H}(f)-\mathcal{H}(-f)}{2} \mbox{ and } 
\mathcal{H}(f)= \frac{1+i}{2}\mathcal{F}(f)+i\frac{1-i}{2}\mathcal{F}(-f),
$$
where $\mathcal{F}(f)$ is the Fourier transform of $f$, which is defined by $$\mathcal{F}(f)(t) = \frac{1}{\sqrt{2\pi}} \int\limits_{-\infty}^{\infty} f(x) e^{-ixt} dx,\ \forall t\in \mathbb{R}.$$
However N. Sundararajan \cite{ns} pointed out that Hartley transform has some computational advantages over the Fourier transform and therefore it can be an ideal alternative of Fourier transform. 

Furthermore, as $|[\mathcal{H}(f)](t)|\le 2|\mathcal{F}(f)(t)|$, $\forall t\in \mathbb{R}$, using the properties of Fourier transform, we have $\mathcal{H}(f)\in C_0(\mathbb{R})$, $\|\mathcal{H}(f)\|_\infty\le 2\|\mathcal{F}(f)\|_\infty\le \|f\|_1$ and hence the Hartley transform $\mathcal{H}: \mathscr{L}^1(\mathbb{R})\to C_0(\mathbb{R})$ is continuous.  

\section{Generalized Boehmian spaces}
We introduce a generalization of  Boehmain space called $G$-Boehmian space $\mathscr{B}^\star(\Gamma,S, \star, \Delta)$,  which is obtained by relaxing the Boehmian-axiom $(A_4)$ in Subsection \ref{abd} by $$(A_4^\prime)\ f\star(s\star t)=(f\star t)\star s, \ \forall f\in \Gamma\mbox{ and }s,t\in S.$$ If we probe into know the necessity for introducing the axioms $(A_3)$ and $(A_4)$ for constructing Boehmians, we could see that these two axioms are used to prove the transitivity of the relation $\sim$ defined on the collection of all quotients in \eqref{rel}.

It is easy to see that the verification of reflexivity and symmetry for the relation $\sim$ are straightforward. So we now verify the transitivity of $\sim$ using $(A_3)$ and $(A_4^\prime)$. 

Let $\frac{g_n}{s_n}$, $\frac{h_n}{t_n}$ and $\frac{p_n}{u_n}$ be quotients such that 
$\frac{g_n}{s_n}\sim \frac{h_n}{t_n}$ and $\frac{h_n}{t_n}\sim \frac{p_n}{u_n}$. Then, we have $g_n, h_n, p_n\in \Gamma$, $\forall n\in \mathbb{N}$, $(s_n)$, $(t_n)$, $(u_n) \in \Delta$ and 
\begin{eqnarray}
g_n\star s_m&=& g_m\star s_n,\ \forall m,n\in \mathbb{N} \label{q1}\\
h_n\star t_m&=& h_m\star t_n,\ \forall m,n\in \mathbb{N} \label{q2}\\
p_n\star u_m&=& p_m\star u_n,\ \forall m,n\in \mathbb{N} \label{q3}\\
g_n\star t_m&=& h_m\star s_n,\ \forall m,n\in \mathbb{N} \label{e1}\\
h_n\star u_m&=& p_m\star t_n,\ \forall m,n\in \mathbb{N}.  \label{e2}
\end{eqnarray}
For arbitrary $m,n,j\in \mathbb{N}$, applying $(A_4^\prime)$, $(A_3)$, \eqref{q1}, \eqref{q2}, \eqref{q3}, \eqref{e1}, and \eqref{e2} we get 
$$
\begin{array}{lllll}
(g_n\star u_m)\star t_j&=& g_n\star (t_j\star u_m)
&=& (g_n\star t_j)\star u_m\\
&=& (h_j\star s_n)\star u_m
&=&h_j\star (u_m\star s_n)\\
&=&(h_j\star u_m)\star s_n
&=&(p_m\star t_j)\star s_n\\
&=&p_m\star (s_n\star t_j)
&=&(p_m\star s_n)\star t_j.
\end{array}
$$
Next applying $(\Delta_2)$, we get $g_n\star u_m=p_m\star s_n$, $\forall m,n\in \mathbb{N}$, and hence $\frac{g_n}{s_n}\sim \frac{p_n}{u_n}$. Thus, the transitivity of $\sim $ follows.

We note that the axioms $(A_3)$ and $(A_4)$ are also used in the proof of the following statements:
\begin{itemize}
\item $\frac{g\star s_n}{s_n}$ is a quotient, $\forall g\in \Gamma$ and $(s_n)\in \Delta$,
\item $\frac{g_n}{s_n}\sim \frac{g_n\star t_n}{s_n\star t_n}$, for each quotient $\frac{g_n}{s_n}$ and for each $(t_n)\in \Delta$,
\item $\frac{g_n\star t}{s_n}$ is a quotient whenever $\frac{g_n}{s_n}$ is a quotient,
\item $\frac{g_n\star t_n+h_n\star s_n}{s_n\star t_n}$ is a quotient whenever $\frac{g_n}{s_n}$ and $\frac{h_n}{t_n}$ are quotients,
\end{itemize}
and these statements can also be proved by using $(A_3)$ and $(A_4^\prime)$ as above.

Now we construct an example of a $G$-Boehmian space by proving the required auxiliary results. Let $\Gamma=S=\mathscr{L}^1(\mathbb{R})$, $\Delta$ be the usual collection of all sequences $(\delta_n)$ from $\mathscr{L}^1(\mathbb{R})$ satisfying the following properties.
\begin{enumerate}
\item[$(P_1)$] $\int\limits_{-\infty}^\infty \delta_n(t)\, dt=1,\ \forall n\in \mathbb{N}$,
\item[$(P_2)$] $\int\limits_{-\infty}^\infty |\delta_n(t)|\, dt\le M,\ \forall n\in \mathbb{N}$, for some $M>0$,
\item[$(P_3)$] supp $\delta_n \to 0$ as $n\to \infty$, where supp $\delta_n$ is the support of $\delta_n$;
\end{enumerate}
and $\#$ be the following convolution
\begin{equation}
(f \#  g)(x) = \frac{1}{2} \int\limits_{-\infty}^\infty [f(x+y) + f(x-y)]g(y) dy,\ \forall x\in \mathbb{R},
\end{equation}
for all $f, g\in \mathscr{L}^1(\mathbb{R})$.
\begin{lem}
If $f,g\in \mathscr{L}^{1}(\mathbb{R})$, then $\|f  \#    g\|_1 \le \|f\|_1 \|g\|_1$ and hence $f\# g\in \mathscr{L}^{1}(\mathbb{R})$.\label{fce}
\end{lem}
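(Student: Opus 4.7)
The plan is the standard Fubini/Tonelli argument used for proving Young's inequality for ordinary convolution, adapted to the symmetrized kernel $f(x+y)+f(x-y)$. The only ingredients needed are the triangle inequality, Tonelli's theorem to justify swapping the order of integration for nonnegative integrands, and translation invariance of Lebesgue measure on $\mathbb{R}$.

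First I would write
$$\|f\#g\|_1 \;=\; \int_{-\infty}^{\infty}\left|\frac{1}{2}\int_{-\infty}^{\infty}\bigl[f(x+y)+f(x-y)\bigr]g(y)\,dy\right|dx,$$
bring the absolute value inside the inner integral, and apply the triangle inequality to the bracket to bound the integrand by $\tfrac12\bigl(|f(x+y)|+|f(x-y)|\bigr)|g(y)|$. Before invoking Fubini I would note that this majorant is nonnegative and measurable on $\mathbb{R}^2$, so Tonelli's theorem allows the order of integration to be interchanged without any a priori integrability hypothesis.

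After swapping, the inner integral becomes
$$\int_{-\infty}^{\infty}\bigl(|f(x+y)|+|f(x-y)|\bigr)dx \;=\; 2\|f\|_1,$$
by the translation invariance of Lebesgue measure applied to each term separately (under $x\mapsto x-y$ and $x\mapsto x+y$ respectively). Integrating the remaining factor $|g(y)|$ over $y$ then yields $\|f\#g\|_1\le \|f\|_1\|g\|_1$. The finiteness of the majorized double integral additionally shows, by Tonelli, that for almost every $x\in\mathbb{R}$ the function $y\mapsto [f(x+y)+f(x-y)]g(y)$ is absolutely integrable, so $(f\#g)(x)$ is well-defined a.e.\ and belongs to $\mathscr{L}^1(\mathbb{R})$.

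There is no substantive obstacle; the only minor subtlety is the order of the argument, namely first bounding $|(f\#g)(x)|$ pointwise a.e.\ and then invoking Tonelli on the nonnegative majorant (rather than starting with Fubini on the signed integrand, which would require the integrability conclusion we are trying to prove). Once that is arranged, the estimate is immediate.
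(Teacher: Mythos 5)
Your argument is correct and follows essentially the same route as the paper: bound the integrand via the triangle inequality, interchange the order of integration, and use translation invariance to evaluate the inner integral as $2\|f\|_1$. Your extra care in invoking Tonelli on the nonnegative majorant (rather than Fubini on the signed integrand) is a welcome refinement of the same argument, not a different proof.
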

\begin{proof}
By using Fubini's theorem, we obtain \\
$\|f  \#    g\|_1 $
$\begin{array}[t]{lll}
&=& \frac{1}{2} \int\limits_{-\infty}^{\infty}\left| \int\limits_{-\infty}^{\infty} [f(x+y) + f(x-y)]g(y) dy \right| dx\\
& \leq& \frac{1}{2} \int\limits_{-\infty}^{\infty} \int\limits_{-\infty}^{\infty} |[f(x+y) + f(x-y)] g(y)| dy \, dx\\
&\leq& \frac{1}{2} \int\limits_{-\infty}^{\infty}|g(y)| \int\limits_{-\infty}^{\infty} |f(x+y) + f(x-y)|  dx \, dy\\
&\leq&  \|f\|_1 \|g\|_1<+\infty
\end{array}$\\[.1cm]
 and hence   $f  \#   g \in \mathscr{L}^1(\mathbb{R})$. 
\end{proof}
\begin{lem}
If $f, g$ and $h \in L^1(\mathbb{R})$ then $(f \# g) \# h = f\# (g \# h) = (f\# h) \# g.$
\end{lem}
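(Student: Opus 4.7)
My plan is to expand both triple convolutions fully via the definition of $\#$, swap orders of integration by Fubini's theorem, and then match the integrands after appropriate linear substitutions. The integrability control needed at every stage is supplied by Lemma~\ref{fce}: it guarantees $f\#g,\ g\#h \in \mathscr{L}^1(\mathbb{R})$ and the finiteness of all iterated absolute integrals, so Fubini applies without further comment.

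First I would compute, directly from the definition,
$$
[(f\#g)\#h](x) = \frac{1}{4}\int_{-\infty}^{\infty}\!\int_{-\infty}^{\infty} \sum_{\varepsilon_1,\varepsilon_2\in\{\pm 1\}} f(x+\varepsilon_1 y+\varepsilon_2 z)\, g(y)\, h(z)\, dy\, dz.
$$
Since the four-term sum is invariant under the exchange $(y,z)\mapsto(z,y)$, renaming the dummy variables in the double integral converts this expression into the analogous one for $[(f\#h)\#g](x)$, establishing the equality $(f\#g)\#h = (f\#h)\#g$ at once.

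For the equality $(f\#g)\#h = f\#(g\#h)$, expanding the right-hand side produces
$$
[f\#(g\#h)](x) = \frac{1}{4}\int\!\!\int [f(x+y)+f(x-y)]\,[g(y+z)+g(y-z)]\, h(z)\, dz\, dy,
$$
in which $g$ is evaluated at translates of $y$ rather than at $y$ itself. The decisive step is to apply, inside each of the four resulting cross terms separately, a one-dimensional translation substitution of the form $y \mapsto y \pm z$ (the sign chosen according to the argument of $g$ in that term). Translation invariance of Lebesgue measure transfers the $z$-shift from $g$ onto $f$, producing exactly the symmetric four-term sum obtained above for $(f\#g)\#h$.

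I do not expect any serious obstacle; the main thing to watch is the simultaneous justification of Fubini's theorem and the four separate substitutions, but this is routine once the $\mathscr{L}^1$ bound of Lemma~\ref{fce} has been invoked.
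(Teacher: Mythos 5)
Your proposal is correct and follows essentially the same route as the paper: both expand the triple convolution, justify the interchange of integrals via the $\mathscr{L}^1$ bound of Lemma~\ref{fce}, perform the translations $y\pm z=u$ to move the shift from $g$ onto $f$, and reduce everything to the symmetric four-term kernel $\sum_{\varepsilon_1,\varepsilon_2\in\{\pm1\}}f(x+\varepsilon_1 y+\varepsilon_2 z)\,g(y)\,h(z)$, whose invariance under $(y,z)\mapsto(z,y)$ yields the second identity. Your $\varepsilon_1,\varepsilon_2$ bookkeeping is a cleaner way to organize the same computation (and, unlike the paper's displayed derivation, keeps the factors of $\tfrac12$ explicit), but there is no substantive difference in method.
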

\begin{proof}
Let $f, g, h \in L^1(\mathbb{R})$ and let $x\in\mathbb{R}$. Repeatedly applying the Fubini's theorem, we get that\\
$[f\# (g \# h)](x)$
\begin{eqnarray}
&=& \int\limits_{-\infty}^{\infty} [f(x+y) + f(x-y)] (g\# h)(y) dy \nonumber\\
&=&\int\limits_{-\infty}^{\infty} [f(x+y) + f(x-y)] \int\limits_{-\infty}^{\infty} [g(y+z) + g(y-z)] h(z) dz dy \nonumber\\
&=&\int\limits_{-\infty}^{\infty} h(z) \left(\int\limits_{-\infty}^{\infty} [f(x+y) + f(x-y)] g(y+z) dy \right.  \nonumber\\
&& \left. + \int\limits_{-\infty}^{\infty} [f(x+y) + f(x-y)]g(y-z) dy \right)\, dz \nonumber\\
&=&\int\limits_{-\infty}^{\infty} h(z) \left(\int\limits_{-\infty}^{\infty} [f(x+u-z) + f(x-u+z)] g(u) du \right. \nonumber\\
&&\left.  + \int\limits_{-\infty}^{\infty} [f(x+u+z) + f(x-u-z)]g(u) du\right) dz, \nonumber\\
& &(\mbox{by using $y+z = u$ in the first term and $y-z = u$ in the second term}) \nonumber\\
&=&\int\limits_{-\infty}^{\infty} h(z) \int\limits_{-\infty}^{\infty} [f(x+u-z) + f(x-u+z)   \nonumber\\
&& +f(x+u+z) + f(x-u-z)]g(u) du\, dz \nonumber\\
&=&\int\limits_{-\infty}^{\infty} h(z) \left(\int\limits_{-\infty}^{\infty} [f(x+z+u) +f(x+z-u)] g(u) du \right.  \nonumber\\
&&\left. + \int\limits_{-\infty}^{\infty}[f(x-z+u) +  f(x-z-u)]g(u) du\right) dz \label{asso}
\end{eqnarray}
\begin{eqnarray}
 &=& \int\limits_{-\infty}^{\infty} h(z) [(f\# g) (x+z) + (f\# g)(x-z)] dx \nonumber\\
 & = &[(f\# g) \# h](x). \nonumber
\end{eqnarray}

Using \eqref{asso}, we get 
\begin{eqnarray*}
[f\# (g \# h)](x)&=&\int\limits_{-\infty}^{\infty} h(z) \int\limits_{-\infty}^{\infty} [f(x+z+u) +f(x+z-u)\\
&& + f(x-z+u) +  f(x-z-u)]g(u) du\, dz \\
&=&\int\limits_{-\infty}^{\infty} g(u) \int\limits_{-\infty}^{\infty}[f(x+z+u) + f(x+z-u)  \\
&&+f(x-z+u) + f(x-z-u)] h(z)dz\, du\\
&=&\int\limits_{-\infty}^{\infty} g(u) \int\limits_{-\infty}^{\infty}[f(x+u+z) + f(x+u-z) +f(x-u+z) \\
&&+ f(x-u-z)] h(z)dz\, du\\
&=&\int\limits_{-\infty}^{\infty} g(u) \left[\int\limits_{-\infty}^{\infty}[f(x+u+z) + f(x+u-z)]h(z) dz \right.  \\
&&\left.+\int\limits_{-\infty}^{\infty}[f(x-u+z)+ f(x-u-z)] h(z)dz\right] du\\
&=&\int\limits_{-\infty}^{\infty} g(u) [(f\# h)(x+u) + (f\# h)(x-u)] du \\
&=& [(f\# h) \# g](x).
\end{eqnarray*}
Since $x\in \mathbb{R}$ is arbitrary, the proof follows.
\end{proof}

\begin{lem}
If $f_n \to f$ as $n\to \infty$ in $\mathscr{L}^1(\mathbb{R})$ and if $g\in \mathscr{L}^1(\mathbb{R})$, then $f_n  \#   g \to f \#   g$ as $n\to \infty$ in $\mathscr{L}^1(\mathbb{R}).$
\end{lem}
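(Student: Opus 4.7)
The plan is to reduce the statement to the norm inequality already established in Lemma \ref{fce} by exploiting the linearity of $\#$ in its first argument. The key observation is that the claim is purely quantitative: we need to estimate $\|f_n \# g - f\# g\|_1$ and show it vanishes as $n\to\infty$, given that $\|f_n - f\|_1 \to 0$.

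First I would verify that $\#$ is linear in its first argument. From the defining formula
\begin{equation*}
(f \# g)(x) = \tfrac{1}{2} \int\limits_{-\infty}^\infty [f(x+y) + f(x-y)]\, g(y)\, dy,
\end{equation*}
and the linearity of the integral, it is immediate that $(f_1 + cf_2)\# g = f_1\# g + c\,(f_2\# g)$ almost everywhere, whenever $f_1, f_2, g \in \mathscr{L}^1(\mathbb{R})$. Lemma \ref{fce} guarantees that each of the terms appearing in this identity is an honest element of $\mathscr{L}^1(\mathbb{R})$, so the manipulation is well-posed.

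Applying this linearity to the difference, I would write $f_n \# g - f \# g = (f_n - f) \# g$ in $\mathscr{L}^1(\mathbb{R})$. Lemma \ref{fce} then yields
\begin{equation*}
\|f_n \# g - f \# g\|_1 = \|(f_n - f) \# g\|_1 \le \|f_n - f\|_1 \, \|g\|_1.
\end{equation*}
Since $\|g\|_1$ is a fixed finite constant and $\|f_n - f\|_1 \to 0$ by hypothesis, the right-hand side tends to $0$, giving $f_n \# g \to f \# g$ in $\mathscr{L}^1(\mathbb{R})$.

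There is no genuine obstacle here; the result is a direct consequence of the submultiplicative $\mathscr{L}^1$-estimate of Lemma \ref{fce} together with the elementary linearity of $\#$ in its first slot. The only care needed is to make sure the linearity identity is invoked inside $\mathscr{L}^1(\mathbb{R})$, which Lemma \ref{fce} ensures.
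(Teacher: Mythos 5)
Your proposal is correct and follows essentially the same route as the paper: both reduce the claim to the estimate $\|(f_n-f)\#g\|_1\le\|f_n-f\|_1\|g\|_1$ from Lemma \ref{fce} and let the right-hand side tend to zero. Your only addition is to spell out explicitly the linearity of $\#$ in its first argument, which the paper uses tacitly.
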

\begin{proof} 
From the proof of Lemma $\ref{fce}$, we have the estimate  
\begin{equation}
\|(f_n-f)\# g\|_1\le \|f_n-f\|_1 \| g\|_1. \label{fng}
\end{equation}
Since $f_n\to f$ as $n\to \infty$ in $\mathscr{L}^1(\mathbb{R})$, the right hand side of (\ref{fng}) tends to zero as $n\to \infty$. Hence the lemma follows.
\end{proof}

\begin{lem}
If $(\delta_n),\, (\psi_n) \in \Delta$ then $(\delta_n \# \psi_n) \in \Delta$. \label{deltanstarpsin}
\end{lem}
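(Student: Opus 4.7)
The plan is to verify each of the three defining properties $(P_1), (P_2), (P_3)$ for the sequence $(\delta_n \# \psi_n)$, using the hypotheses on $(\delta_n)$ and $(\psi_n)$.

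For $(P_1)$, I would compute $\int_{-\infty}^{\infty} (\delta_n \# \psi_n)(x)\, dx$ directly from the definition of $\#$. By Fubini's theorem (justified by Lemma~\ref{fce} applied to $|\delta_n|$ and $|\psi_n|$), I can interchange the order of integration and pull $\psi_n(y)$ outside, after which the translation invariance of Lebesgue measure gives $\int \delta_n(x \pm y)\, dx = \int \delta_n = 1$ for each $y$. The factor of $1/2$ cancels the sum of the two equal integrals, leaving $\int \psi_n(y)\, dy = 1$.

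For $(P_2)$, I would simply invoke Lemma~\ref{fce}: $\|\delta_n \# \psi_n\|_1 \le \|\delta_n\|_1 \|\psi_n\|_1 \le M_1 M_2$, where $M_1, M_2$ are the uniform bounds from $(P_2)$ for $(\delta_n)$ and $(\psi_n)$ respectively. This gives a uniform bound $M := M_1 M_2$.

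For $(P_3)$, suppose $\operatorname{supp} \delta_n \subseteq [-a_n, a_n]$ and $\operatorname{supp} \psi_n \subseteq [-b_n, b_n]$ with $a_n, b_n \to 0$. For $(\delta_n \# \psi_n)(x) \neq 0$ there must exist $y$ with $\psi_n(y) \neq 0$ (so $|y| \le b_n$) and at least one of $\delta_n(x+y), \delta_n(x-y)$ nonzero (so $|x \pm y| \le a_n$). In either case $|x| \le a_n + b_n$, hence $\operatorname{supp}(\delta_n \# \psi_n) \subseteq [-(a_n+b_n), a_n+b_n]$, which shrinks to $\{0\}$.

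None of these steps presents a genuine obstacle; the argument is essentially bookkeeping. The most delicate point is the application of Fubini in $(P_1)$, but absolute integrability of $(x,y) \mapsto \delta_n(x \pm y)\psi_n(y)$ follows from the computation in the proof of Lemma~\ref{fce}, so this is routine.
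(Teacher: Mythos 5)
Your proposal is correct and follows essentially the same route as the paper: a Fubini computation for $(P_1)$, the norm estimate of Lemma~\ref{fce} for the uniform bound in $(P_2)$, and the support containment $\operatorname{supp}(\delta_n \# \psi_n) \subseteq [\operatorname{supp}\delta_n + \operatorname{supp}\psi_n] \cup [\operatorname{supp}\delta_n - \operatorname{supp}\psi_n]$ for $(P_3)$. Your explicit appeal to Lemma~\ref{fce} for $(P_2)$ is slightly cleaner than the paper's ``by a similar argument,'' but the content is identical.
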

\begin{proof}
By using Fubini's theorem, we get\\
$\begin{array}{lll}
\int\limits_{-\infty}^\infty (\delta_n \#  \psi_n)(x) \,dx& =& \frac{1}{2}\int\limits_{-\infty}^\infty \int\limits_{-\infty}^\infty [\delta_n(x+y) + \delta_n(x-y)]\,\psi_n(y)\,dy\,dx\\
&=&\frac{1}{2}\int\limits_{-\infty}^\infty \psi_n(y)\int\limits_{-\infty}^\infty [\delta_n(x+y) + \delta_n(x-y)]\, dx\, dy\\
&=&\frac{1}{2}\int\limits_{-\infty}^\infty \psi_n(y)\left[\int\limits_{-\infty}^\infty \delta_n(z)\, dz + \int\limits_{-\infty}^\infty \delta_n(z)\, dz\right] dy\\
&=&\frac{1}{2} \int\limits_{-\infty}^\infty 2 \psi_n(y) dy  = 1, \mbox{ for all $n \in \mathbb{N}.$}\\
\end{array}$\\
By a similar argument, it is easy to verify that $\int\limits_{-\infty}^\infty |(\delta_n \#  \psi_n)(x)| \,dx \leq M$ for some $M >0$.  Since $\mbox{supp } \delta_n \#  \psi_n \subset [\mbox{supp }\delta_n + \mbox{supp }\psi_n] \cup [\mbox{supp }\delta_n - \mbox{supp }\psi_n]$, we get that
supp $(\delta_n\#  \psi_n)\to \{0\}$ as $n\to\infty$. Hence it follows that $(\delta_n \#  \psi_n)\in \Delta$.
\end{proof}

\begin{thm}
Let $f\in \mathscr{L}^1(\mathbb{R})$ and let $(\delta_n)\in \Delta$, then $f \#  \delta_n \to f$ as $n\to \infty$ in $\mathscr{L}^1(\mathbb{R})$. \label{f*del}
\end{thm}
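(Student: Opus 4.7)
The plan is to rewrite $f\#\delta_n - f$ as a single integral against $\delta_n$ by exploiting the normalization $(P_1)$, then reduce the estimate to continuity of translation in $\mathscr{L}^1(\mathbb{R})$, and finally use the shrinking-support condition $(P_3)$ together with the uniform $L^1$-bound $(P_2)$ to conclude.

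First I would use $(P_1)$ to write, for each $x \in \mathbb{R}$,
$$f(x) \;=\; \frac{1}{2}\int_{-\infty}^{\infty}[f(x)+f(x)]\,\delta_n(y)\,dy,$$
so that
$$(f\#\delta_n)(x) - f(x) \;=\; \frac{1}{2}\int_{-\infty}^{\infty}\bigl[f(x+y)+f(x-y)-2f(x)\bigr]\,\delta_n(y)\,dy.$$
Taking the $\mathscr{L}^1$-norm in $x$, applying Fubini's theorem (justified exactly as in Lemma \ref{fce}), and using the triangle inequality together with the translation invariance of the Lebesgue integral, I obtain
$$\|f\#\delta_n - f\|_1 \;\le\; \frac{1}{2}\int_{-\infty}^{\infty}|\delta_n(y)|\,\bigl(\|T_{-y}f - f\|_1 + \|T_y f - f\|_1\bigr)\,dy,$$
where $(T_a f)(x) := f(x-a)$.

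Next I would invoke the standard fact that translation is continuous in $\mathscr{L}^1(\mathbb{R})$, i.e.\ $\|T_a f - f\|_1 \to 0$ as $a\to 0$ (proved by density of $C_c(\mathbb{R})$ in $\mathscr{L}^1(\mathbb{R})$ combined with uniform continuity of compactly supported continuous functions). Given $\varepsilon>0$, this lets me pick $\eta>0$ so that $\|T_y f - f\|_1 < \varepsilon$ whenever $|y|<\eta$. By $(P_3)$, there exists $N$ such that $\mathrm{supp}\,\delta_n \subset (-\eta,\eta)$ for every $n\ge N$. For such $n$, the integrand on the right-hand side is supported in $(-\eta,\eta)$, and therefore, using $(P_2)$,
$$\|f\#\delta_n - f\|_1 \;\le\; \frac{1}{2}\cdot 2\varepsilon \int_{-\eta}^{\eta}|\delta_n(y)|\,dy \;\le\; \varepsilon M.$$
Since $\varepsilon>0$ was arbitrary, this yields $f\#\delta_n \to f$ in $\mathscr{L}^1(\mathbb{R})$.

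The step I expect to require the most care is the passage from the pointwise identity to the $\mathscr{L}^1$-norm estimate, since one must justify the use of Fubini on $[f(x+y)+f(x-y)-2f(x)]\delta_n(y)$ and recognize the inner integrals as translates of $f$ after a change of variables. Once this is in place, the two delta-net properties $(P_2)$ and $(P_3)$, combined with the classical $\mathscr{L}^1$-continuity of translation, finish the proof with essentially no further computation.
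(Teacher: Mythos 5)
Your proposal is correct and follows essentially the same route as the paper: both use $(P_1)$ to write $f\#\delta_n-f$ as a single integral of $f(x+y)+f(x-y)-2f(x)$ against $\delta_n(y)$, apply Fubini to reduce to $\mathscr{L}^1$-continuity of translation, and finish with $(P_3)$ and the uniform bound $(P_2)$. The only cosmetic difference is that the paper chooses the translation threshold as $\epsilon/M$ so the final bound is $\epsilon$ rather than $\varepsilon M$.
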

\begin{proof}
Let $\epsilon > 0$ be given. By the property $(P_2)$ of $(\delta_n)$, there exists $M>0$ with $\int\limits_{-\infty}^\infty |\delta_n(t) | dt \leq M, \, \forall\ n\in \mathbb{N}$.  Using the  continuity of the mapping $y \mapsto f_y$ from $\mathbb{R}$ in to $\mathscr{L}^1(\mathbb{R})$, (see \cite[Theorem 9.5]{rcrudin}), choose $\delta > 0$ such that 
\begin{equation}
\| f_y - f_0 \|_1 < \frac{\epsilon}{M}\mbox{ whenever } |y| < \delta, \label{fy}
\end{equation}
where $f_y(x)=f(x-y)$, $\forall x \in \mathbb{R}$. By the property $(P_3)$ of $(\delta_n)$ there exists $N\in \mathbb{N}$ with supp $\delta_n \subset [-\delta, \delta], \ \forall\, n\geq N$. By using the property $(P_1)$ of $(\delta_n)$ and the Fubini's theorem, we obtain\\
$\| f\#  \delta_n - f\|_1 $\\
$\begin{array}[t]{lll}
&=& \int\limits_{-\infty}^{\infty} \left|\frac{1}{2}\int\limits_{-\infty}^{\infty} [f(x+y) + f(x-y)]\ \delta_n(y) dy  - f(x) \int\limits_{-\infty}^{\infty}\delta_n(y) dy\right| dx\\
&\leq& \frac{1}{2}\int\limits_{-\infty}^{\infty} \int\limits_{-\infty}^{\infty} \left(|f(x+y) - f(x)|+ |f(x-y) - f(x)|\right) \ |\delta_n(y)| dx  dy\\
&\leq& \frac{1}{2}\int\limits_{-\infty}^{\infty}\left(\int\limits_{-\infty}^{\infty} |f(x+y) - f(x)| dx  + \int\limits_{-\infty}^{\infty} |f(x-y) - f(x)|dx\right) \ |\delta_n(y)|  dy
\end{array}$\\
$\begin{array}[t]{lll}
&=&\frac{1}{2}\int\limits_{-\delta}^{\delta} (\|f_{-y} - f_0\|_1 + \|f_y - f_0\|_1) \ |\delta_n(y)| dy, \ \forall\ n\geq N\\
&<&\frac{1}{2}\int\limits_{-\delta}^{\delta} (\frac{\epsilon}{M} + \frac{\epsilon}{M}) \ |\delta_n(y)| dy, \ \mbox{ by (\ref{fy})}\\
&=&\frac{\epsilon}{M} \int\limits_{-\delta}^{\delta}\ |\delta_n(y)| dy \leq \epsilon,\ \forall \ n\geq N
\end{array}$\\
and hence $f \#  \delta_n \to f$ as $n\to \infty$ in $\mathscr{L}^1(\mathbb{R})$.
\end{proof}
\begin{lem}
If $f_n \to f$ as $n\to \infty$ in $\mathscr{L}^1(\mathbb{R})$ and $(\delta_n) \in \Delta$, then $f_n \#  \delta_n \to f$ as $n\to \infty$ in $\mathscr{L}^1(\mathbb{R})$.
\end{lem}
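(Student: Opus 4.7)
The plan is to reduce the claim to two pieces via the triangle inequality and to invoke results already established. Specifically, I will write
\[
\|f_n \# \delta_n - f\|_1 \;\le\; \|f_n \# \delta_n - f \# \delta_n\|_1 + \|f \# \delta_n - f\|_1,
\]
and argue that each of the two terms on the right tends to $0$ as $n\to\infty$.

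For the first term, I would apply the estimate $\|g \# h\|_1 \le \|g\|_1 \|h\|_1$ from (the proof of) Lemma \ref{fce} to $g = f_n - f$ and $h = \delta_n$, obtaining
\[
\|f_n \# \delta_n - f \# \delta_n\|_1 \;=\; \|(f_n - f)\# \delta_n\|_1 \;\le\; \|f_n - f\|_1\,\|\delta_n\|_1 \;\le\; M\,\|f_n - f\|_1,
\]
where $M$ is the uniform bound on $\|\delta_n\|_1$ furnished by property $(P_2)$ in the definition of $\Delta$. Since $f_n\to f$ in $\mathscr{L}^1(\mathbb{R})$, this upper bound tends to $0$. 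For the second term, I would directly apply Theorem \ref{f*del}, which asserts precisely that $f\#\delta_n\to f$ in $\mathscr{L}^1(\mathbb{R})$.

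I do not expect a genuine obstacle here; the only mild subtlety is that $\delta_n$ varies with $n$, so one cannot quote Lemma continuity of $\#$ in its first slot with a fixed second argument. This is exactly where property $(P_2)$ intervenes, providing the uniform bound $M$ that keeps the product $\|f_n - f\|_1\,\|\delta_n\|_1$ under control. Combining the two limits via the triangle inequality completes the proof.
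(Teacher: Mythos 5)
Your proposal is correct and follows essentially the same route as the paper: the same triangle-inequality decomposition, the same use of the estimate from Lemma \ref{fce} together with the uniform bound $M$ from $(P_2)$, and the same appeal to Theorem \ref{f*del} for the second term. Nothing further is needed.
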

\begin{proof}
For any $n\in \mathbb{N}$ we have,
\begin{eqnarray*}
\|f_n \#  \delta_n - f \|_1 &=& \|f_n \#  \delta_n -f \#  \delta_n + f \#  \delta_n- f\|_1\\
&\le& \|(f_n-f) \#  \delta_n\|_1 + \| f \#  \delta_n - f\|_1\\
&\le&\|f_n - f\|_1 \ \|\delta_n\|_1 + \| f \#  \delta_n - f\|_1, \mbox{ (by Lemma \ref{fce})}\\
&\le& M \|f_n - f\|_1  + \| f \#  \delta_n - f\|_1
\end{eqnarray*}
Since $f_n\to f$ as $n\to \infty$ in $\mathscr{L}^1(\mathbb{R})$ and by Theorem $\ref{f*del}$, the right hand side of the last inequality tends to zero as $n\to \infty$. Hence the lemma follows.
\end{proof}
Thus the $G$-Boehmian space $ \mathscr{B}^\star_{\mathscr{L}^1}=\mathscr{B}^\star(\mathscr{L}^1(\mathbb{R}),\mathscr{L}^1(\mathbb{R}),\#,\Delta)$ has been constructed. 

Finally, we justify that the convolution $\#$ introduced in this section is not commutative.

\begin{exa}
If $f(x) = \begin{cases}
e^{-x} &\text{ if } x \geq 0\\
0 &\text{ if } x < 0
\end{cases}$  and $g(x) = \begin{cases}
0 &\text{ if } x > 0\\
e^{x} &\text{ if } x \leq 0,
\end{cases}$ then $f, g \in L^1(\mathbb{R})$ and $f\# g \neq g\# f$. 
\end{exa}
Indeed, for any $x\in \mathbb{R}$, we have\\
$(f\# g)(x) = \int\limits_{-\infty}^{\infty} [f(x+y) + f(x-y)] g(y) \, dy = \int\limits_{-\infty}^{0} [f(x+y) + f(x-y)] e^y dy $\\
$\begin{array}[t]{lll}
&=& \int\limits_{-\infty}^{0} f(x+y) e^y dy +\int\limits_{-\infty}^{0}  f(x-y) e^y dy \\
&=&\begin{cases}
\int\limits_{-x}^{0} e^{-(x+y)} e^y dy +\int\limits_{-\infty}^{0}  e^{-(x-y)} e^y dy &\text{ if } x\geq 0 \\
 0 + \int\limits_{-\infty}^{x}  e^{-(x-y)} e^y dy &\text{ if } x< 0 
\end{cases}\\
&=&\begin{cases}
e^{-x} \left(\int\limits_{-x}^{0}  dy +\int\limits_{-\infty}^{0}  e^{2y} dy\right) &\text{ if } x\geq 0 \\
e^{-x} \int\limits_{-\infty}^{x}   e^{2y} dy &\text{ if } x< 0 
\end{cases} \\
&=& \begin{cases}
e^{-x} (x+\frac{1}{2}) &\text{ if } x\geq 0 \\
\frac{e^{x}}{2}  &\text{ if } x< 0 
\end{cases}
\end{array}$\\
 and \\
$(g\# f)(x) = \int\limits_{-\infty}^{\infty} [g(x+y) + g(x-y)] f(y) \, dy = \int\limits_{0}^{\infty} [g(x+y) + g(x-y)] e^{-y} dy  $\\
$\begin{array}[t]{lll}
&=& \int\limits_{0}^{\infty} g(x+y) e^{-y} dy +\int\limits_{0}^{\infty}  g(x-y) e^{-y} dy \\
&=&\begin{cases}
0+\int\limits_x^\infty e^{x-y} e^{-y} dy &\text{ if } x > 0\\
\int\limits_0^{-x} e^{x+y} e^{-y} dy + \int\limits_0^\infty e^{x-y} e^{-y} dy &\text{ if } x \leq 0 
\end{cases}\\
&=&\begin{cases}
e^x\int\limits_x^\infty e^{-2y} dy &\text{ if } x > 0\\
e^x\left(\int\limits_0^{-x}  dy + \int\limits_0^\infty e^{-2y} dy\right) &\text{ if } x \leq 0 
\end{cases}\\
&=&\begin{cases}
\frac{1}{2}e^{-x} &\text{ if } x > 0\\
e^x(-x+\frac{1}{2}) &\text{ if } x \leq 0. 
\end{cases}
\end{array}$

From the above computations it is clear that $f\# g \neq g\# f$ and hence our claim holds.

\section{Hartley transform on $G$-Boehmians}
As in the general case of extending any integral transform to the context of Boehmians, we have to first obtain a suitable convolution theorem for Hartley transform. To obtain a compact version of a convolution theorem for Hartley transform, for $f\in \mathscr{L}^1(\mathbb{R})$, we define 
\begin{equation}
[\mathcal{C}(f)](t)=\int\limits_{-\infty}^\infty f(x)\cos xt \, dx, \ t\in \mathbb{R}.
\end{equation}
We point out that $\mathcal{C}$ is not the usual Fourier cosine transform, as Fourier cosine transform is defined for integrable functions on non-negative real numbers.
\begin{thm}
 If $f,g\in \mathscr{L}^{1}(\mathbb{R})$, then $\mathcal{H}(f \#  g) = \mathcal{H}(f)\, \cdot \mathcal{C}(g).$\label{conH}
\end{thm}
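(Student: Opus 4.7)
The plan is to unfold the definitions, swap the order of integration by Fubini's theorem, and reduce the two pieces of $f \# g$ to a single Hartley transform of $f$ via translation-type substitutions combined with trigonometric angle-addition identities.

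First, I would write
\begin{eqnarray*}
[\mathcal{H}(f \# g)](t)
&=& \frac{1}{\sqrt{2\pi}} \int_{-\infty}^\infty (f\#g)(x)\,[\cos xt + \sin xt]\, dx \\
&=& \frac{1}{2\sqrt{2\pi}} \int_{-\infty}^\infty \!\!\int_{-\infty}^\infty [f(x+y)+f(x-y)]\,g(y)\,[\cos xt + \sin xt]\, dy\, dx.
\end{eqnarray*}
Since $|[f(x+y)+f(x-y)]g(y)[\cos xt+\sin xt]| \le 2|g(y)|\,(|f(x+y)|+|f(x-y)|)$, the double integral is absolutely convergent (with value bounded by $2\|f\|_1\|g\|_1$ after integrating out $x$), so Fubini applies and I pull $g(y)$ outside.

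Next, in the two $x$-integrals I would substitute $u=x+y$ and $u=x-y$ respectively. The key computation is then the trigonometric identity
\begin{eqnarray*}
[\cos(u-y)t + \sin(u-y)t] + [\cos(u+y)t + \sin(u+y)t] &=& 2\cos yt\,(\cos ut + \sin ut),
\end{eqnarray*}
which follows from the angle-addition formulas: the $\sin yt$ contributions from the two pieces are $(\sin ut - \cos ut)\sin yt$ and $(\cos ut - \sin ut)\sin yt$ and thus cancel, leaving twice the $\cos yt$ contribution. This cancellation is precisely why the cosine transform $\mathcal{C}$ (rather than $\mathcal{H}$) appears on the right-hand side and is the only nonroutine step of the argument.

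Combining the two integrals over $u$ therefore yields
\begin{eqnarray*}
[\mathcal{H}(f \# g)](t) &=& \frac{1}{2\sqrt{2\pi}} \int_{-\infty}^\infty g(y)\,\Big[2\cos yt \int_{-\infty}^\infty f(u)(\cos ut + \sin ut)\,du\Big]\,dy \\
&=& [\mathcal{H}(f)](t) \int_{-\infty}^\infty g(y)\cos yt\, dy \;=\; [\mathcal{H}(f)](t)\cdot [\mathcal{C}(g)](t),
\end{eqnarray*}
after recognizing the inner $u$-integral as $\sqrt{2\pi}\,[\mathcal{H}(f)](t)$. I expect the main obstacle to be purely bookkeeping in the trigonometric step; once the sine terms are seen to cancel, the rest of the argument is a direct computation.
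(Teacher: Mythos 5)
Your proposal is correct and follows essentially the same route as the paper's proof: unfold the definitions, apply Fubini, substitute $u=x\pm y$ in the two pieces, and use the angle-addition formulas so that the $\sin yt$ terms cancel and only the $\cos yt$ factor survives, yielding $\mathcal{H}(f)\cdot\mathcal{C}(g)$. The only cosmetic difference is that the paper groups the cosine and sine terms separately via $\cos(zt-yt)+\cos(zt+yt)=2\cos zt\cos yt$ and $\sin(zt-yt)+\sin(zt+yt)=2\sin zt\cos yt$, whereas you combine them into a single kernel identity; you also make the Fubini justification explicit, which the paper omits.
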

\begin{proof}
Let $t\in \mathbb{R}$ be arbitrary. By using Fubini's theorem, we obtain that\\
$[\mathcal{H}(f\#  g)](t) = \frac{1}{\sqrt{2\pi}}\int\limits_{-\infty}^{\infty} (f\#  g)(x) [\cos xt + \sin xt ] dx$\\
$\begin{array}[t]{lll}
&=&\frac{1}{\sqrt{2\pi}}\frac{1}{2} \int\limits_{-\infty}^{\infty} \int\limits_{-\infty}^\infty [f(x+y) + f(x-y)]g(y) dy [\cos xt + \sin xt ] dx\\
&=&\frac{1}{2\sqrt{2\pi}} \int\limits_{-\infty}^{\infty} g(y) \int\limits_{-\infty}^\infty [f(x+y) + f(x-y)]\ [\cos xt + \sin xt ]dx \, dy\\
&=&\frac{1}{2\sqrt{2\pi}} \int\limits_{-\infty}^{\infty} g(y)  \left(\int\limits_{-\infty}^\infty f(x+y)\cos xt dx +\int\limits_{-\infty}^\infty f(x+y)\sin xt dx \right.\\
& &\left. + \int\limits_{-\infty}^\infty f(x-y)\cos xt dx  + \int\limits_{-\infty}^\infty f(x-y)\sin xt dx \right) dy\\
&=& \frac{1}{2\sqrt{2\pi}} \int\limits_{-\infty}^{\infty} g(y)  \left(\int\limits_{-\infty}^\infty f(z) \cos (zt-yt) dz + \int\limits_{-\infty}^\infty f(z) \sin (zt-yt) dz \right.\\
& &\left. + \int\limits_{-\infty}^\infty f(z) \cos (zt+yt) dz + \int\limits_{-\infty}^\infty f(z) \sin (zt+yt) dz \right) dy
\end{array}$\\
$\begin{array}[t]{lll}
&=&\frac{1}{2\sqrt{2\pi}} \int\limits_{-\infty}^{\infty} g(y)  \int\limits_{-\infty}^\infty f(z) [2 \cos zt \cos yt  + 2\sin zt \cos yt ]dz\, dy\\
&=&\frac{1}{\sqrt{2\pi}} \int\limits_{-\infty}^{\infty} g(y) \cos yt \int\limits_{-\infty}^\infty f(z) [ \cos zt   + \sin zt ]dz\, dy\\
&=& [\mathcal{H}(f)](t)\cdot [\mathcal{C}(g)](t).
\end{array}$\\
Thus we have $\mathcal{H}(f \#  g) = \mathcal{H}(f)\, \cdot \mathcal{C}(g).$
\end{proof}

\begin{thm}
If $f,g\in \mathscr{L}^{1}(\mathbb{R})$, then $\mathcal{C}(f \#  g) = \mathcal{C}(f) \cdot \mathcal{C}(g).$\label{conC}
\end{thm}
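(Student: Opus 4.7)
The plan is to follow the same template as the proof of Theorem \ref{conH}: expand $[\mathcal{C}(f\#g)](t)$ using the definition of $\#$, swap the order of integration via Fubini (justified by $\|f\# g\|_1 \le \|f\|_1\|g\|_1$ from Lemma \ref{fce} combined with boundedness of $\cos$), apply the substitutions $z = x+y$ and $z = x-y$ to the two inner integrals, and then collapse the resulting cosines with a product-to-sum identity.

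Concretely, I would first write
\[
[\mathcal{C}(f\#g)](t) = \frac{1}{2}\int_{-\infty}^\infty \int_{-\infty}^\infty [f(x+y)+f(x-y)]\, g(y)\, dy\, \cos xt\, dx,
\]
interchange the order of integration, and split the inner $x$-integral into two pieces. In the piece with $f(x+y)$ I would substitute $z = x+y$ to get $\int_{-\infty}^\infty f(z)\cos(zt - yt)\,dz$, and in the piece with $f(x-y)$ I would substitute $z = x-y$ to get $\int_{-\infty}^\infty f(z)\cos(zt + yt)\,dz$. Adding these two inner integrals and invoking the identity $\cos(zt-yt) + \cos(zt+yt) = 2\cos zt\cos yt$ eliminates the sine cross-terms without any cancellation miracle needed — unlike the Hartley case (Theorem \ref{conH}), here there are no $\sin xt$ contributions to begin with, so the calculation is cleaner.

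After this simplification the double integral factors as
\[
[\mathcal{C}(f\#g)](t) = \left(\int_{-\infty}^\infty g(y)\cos yt\, dy\right)\left(\int_{-\infty}^\infty f(z)\cos zt\, dz\right) = [\mathcal{C}(f)](t)\cdot [\mathcal{C}(g)](t),
\]
and since $t \in \mathbb{R}$ was arbitrary, the theorem follows. There is no real obstacle: the proof is essentially a simpler variant of Theorem \ref{conH}, since the absence of the $\sin xt$ kernel spares us the step of showing that the $\sin zt \sin yt$ terms cancel. The only mild care needed is the standard verification that Fubini applies, which is immediate from $|f(x\pm y)g(y)\cos xt| \le |f(x\pm y)||g(y)| \in \mathscr{L}^1(\mathbb{R}^2)$.
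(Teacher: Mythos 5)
Your proposal is correct and follows essentially the same route as the paper's own proof: expand via the definition of $\#$, interchange the order of integration by Fubini, substitute $z = x+y$ and $z = x-y$ in the two inner integrals, and collapse $\cos(zt-yt)+\cos(zt+yt)$ to $2\cos zt\cos yt$ so the double integral factors. Your added remark that this case is cleaner than Theorem \ref{conH} because no sine terms arise is accurate but does not change the argument.
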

\begin{proof}
Let $t\in \mathbb{R}$ be arbitrary. By using Fubini's theorem, we obtained\\
$[\mathcal{C}(f\#  g)](t) = \int\limits_{-\infty}^{\infty} (f\#  g)(x) \cos xt\, dx$\\
$\begin{array}[t]{lll}
&=&\frac{1}{2} \int\limits_{-\infty}^{\infty} \int\limits_{-\infty}^\infty [f(x+y) + f(x-y)]g(y) dy\, \cos xt  dx\\
&=&\frac{1}{2} \int\limits_{-\infty}^{\infty} g(y)  \left(\int\limits_{-\infty}^\infty f(x+y)\cos xt dx  + \int\limits_{-\infty}^\infty f(x-y)\cos xt dx \right) dy\\
&=& \frac{1}{2} \int\limits_{-\infty}^{\infty} g(y)  \left(\int\limits_{-\infty}^\infty f(z) \cos (zt-yt) dz + \int\limits_{-\infty}^\infty f(z) \cos (zt+yt) dz \right)dy\\
&=& \int\limits_{-\infty}^{\infty} g(y) \cos yt  \, \int\limits_{-\infty}^\infty f(z) \cos zt dz\, dy\\
&=&  [\mathcal{C}(f)](t)\cdot [\mathcal{C}(g)](t).\\
\end{array}$\\
Since $t\in \mathbb{R}$ is arbitrary, we have $\mathcal{C}(f \#  g) = \mathcal{C}(f) \cdot \mathcal{C}(g).$
\end{proof}

\begin{lem}
If $(\delta_n) \in \Delta$ then $\mathcal{C}(\delta_n) \to 1$ as $n\to \infty$ uniformly on compact subset of $\mathbb{R}$. \label{tendsto1}
\end{lem}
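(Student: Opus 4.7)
The plan is to reduce the problem to estimating $|\mathcal{C}(\delta_n)(t) - 1|$ on a compact set $K \subset \mathbb{R}$ by exploiting all three properties $(P_1)$, $(P_2)$, $(P_3)$ of $(\delta_n)\in\Delta$. The starting trick is to use $(P_1)$ to write the constant $1$ as an integral against $\delta_n$, which gives
\[
\mathcal{C}(\delta_n)(t) - 1 \;=\; \int_{-\infty}^{\infty} \delta_n(x)\,[\cos xt - 1]\,dx,
\]
so the question becomes one of showing that the integrand is uniformly small.

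Next, I would fix a compact $K\subset\mathbb{R}$ and set $T=\sup_{t\in K}|t|$, and let $\epsilon>0$ be given. By the continuity of $\cos$ at $0$, I can choose $\eta>0$ such that $|\cos s - 1|<\epsilon/M$ whenever $|s|<\eta$, where $M$ is the constant from $(P_2)$. Property $(P_3)$ then lets me pick $N\in\mathbb{N}$ so that for all $n\ge N$ we have $\operatorname{supp}\delta_n\subset[-\eta/T,\eta/T]$. For $t\in K$ and $x\in\operatorname{supp}\delta_n$ this forces $|xt|<\eta$, and hence $|\cos xt - 1|<\epsilon/M$ uniformly in $t\in K$.

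Combining this pointwise bound on the integrand with the uniform $L^1$-bound $(P_2)$ yields
\[
|\mathcal{C}(\delta_n)(t) - 1| \;\le\; \int_{-\infty}^{\infty}|\delta_n(x)|\,|\cos xt - 1|\,dx \;\le\; \frac{\epsilon}{M}\cdot M \;=\;\epsilon
\]
for all $t\in K$ and all $n\ge N$. Since $\epsilon>0$ was arbitrary, this shows $\mathcal{C}(\delta_n)\to 1$ uniformly on $K$, which is exactly the statement of the lemma.

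There is no real obstacle here; the argument is a standard approximate-identity calculation. The only mildly subtle step is realising that one must scale the neighbourhood given by continuity of $\cos$ by the factor $T$ (so that $|xt|<\eta$ is the relevant condition, not $|x|<\eta$), which is precisely why the convergence is only uniform on compact sets rather than on all of $\mathbb{R}$.
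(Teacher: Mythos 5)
Your proposal is correct and follows essentially the same route as the paper's proof: both rewrite $1=\int\delta_n$ using $(P_1)$, bound $|\mathcal{C}(\delta_n)(t)-1|\le\int|\delta_n(s)|\,|\cos ts-1|\,ds$, shrink the support via $(P_3)$, and invoke the uniform bound $(P_2)$. The only cosmetic difference is that you control $|\cos xt-1|$ by continuity of cosine at the origin, while the paper uses the mean value theorem to get $|\cos ts-1|\le|ts|\le M_2\epsilon$ on the support.
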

\begin{proof}
Let $K$ be a compact subset of $\mathbb{R}$. Let $\epsilon >0$ be given. Choose $M_1>0$, $M_2>0$ and a positive integer $N$ such that  $\int_{-\infty}^\infty |\delta_n(t)|\, dt \le M_1, \, \forall n\in \mathbb{N}$, $K\subset [-M_2,M_2]$  and $supp\,\delta_n \subset [-\epsilon,\epsilon]$ for all $n \geq N$.  Then for $t\in K$ and $n\ge N$, we have \\
$\begin{array}{lll}
|[\mathcal{C}(\delta_n)](t) - 1| &=&|\int\limits_{-\infty}^\infty \delta_n(s) \cos ts \, ds - \int\limits_{-\infty}^\infty \delta_n(s)\, ds|\\
& \leq &\int\limits_{-\infty}^\infty |\delta_n(s)| \, |\cos ts - 1|\, ds\\
&=& \int\limits_{-\epsilon}^\epsilon |\delta_n(s)| \, |\cos ts - 1|\, ds,\ \forall\, n \geq N\\
&\leq& \int\limits_{-\epsilon}^\epsilon |\delta_n(s)| \,|ts|\, ds, \\
&&\mbox{(by using mean-value theorem, and $|\sin x|\le 1, \ \forall x\in \mathbb{R}$)}\\
&\leq& M_2\epsilon \int\limits_{-\epsilon}^\epsilon |\delta_n(s)| \,ds \\
&\le& M_2 M_1\epsilon.
\end{array}$\\
This completes the proof.
\end{proof}

\begin{defn}
For $\beta = \left[\frac{f_n}{\delta_n}\right]\in \mathscr{B}^\star_{\mathscr{L}^1}$, we define the extended Hartley transform of $\beta$ by $[\mathscr{H}(\beta)](t) = \lim\limits_{n\to \infty} [\mathcal{H}(f_n)](t), \ \ (t\in \mathbb{R}).$
\end{defn}

The above limit exists and is independent of the representative $\frac{f_n}{\delta_n}$ of $\beta$. Indeed, for $t\in \mathbb{R}$, choose $k$ such that $[\mathcal{C}(\delta_k)](t)\neq 0$. Then, applying Theorem \ref{conH}, we obtain that $[\mathcal{H}(f_n)](t) = \frac{[\mathcal{H}(f_n\#  \delta_k)](t)}{[\mathcal{C}(\delta_k)](t)} = \frac{[\mathcal{H}(f_k\#  \delta_n)](t)}{[\mathcal{C}(\delta_k)](t)} = \frac{[\mathcal{H}(f_k)](t)}{[\mathcal{C}(\delta_k)](t)}  [\mathcal{C}(\delta_n)](t).$ Therefore, using Lemma \ref{tendsto1}, we get  $[\mathcal{H}(f_n)](t) \to   \frac{[\mathcal{H}(f_k)](t)}{[\mathcal{C}(\delta_k)](t)}$, as $n\to \infty$ uniformly on each compact subset of $\mathbb{R}$. If $\frac{f_n}{\delta_n}\sim \frac{g_n}{\psi_n}$, then
$f_n \#  \psi_m = g_m \#  \delta_n$ for all $m,n \in \mathbb{N}$. Again using Theorem \ref{conH}, we get  $\lim\limits_{n\to\infty}[\mathcal{H}(f_n)](t) =  \frac{[\mathcal{H}(f_k)](t)}{[\mathcal{C}(\delta_k)](t)} = \frac{[\mathcal{H}(g_k)](t)}{[\mathcal{C}(\psi_k)](t)} = \lim\limits_{n\to\infty}[\mathcal{H}(g_n)](t).$

If $f\in \mathscr{L}^1(\mathbb{R})$ and  $\beta = \left[\frac{f\#  \delta_n}{\delta_n}\right]$, then $$[\mathscr{H}(\beta)](t) = \lim\limits_{n\to\infty} [\mathcal{H}(f\#  \delta_n)](t) = [\mathcal{H}(f)](t)\lim\limits_{n\to\infty} [\mathcal{C}(\delta_n)](t) = [\mathcal{H}(f)](t),$$ as $[\mathcal{C}(\delta_n)](t) \to 1$ as $n\to \infty$ uniformly on each compact subset of $\mathbb{R}$. 
This shows that the extended Hartley transform is consistent with the Hartley transform on $\mathscr{L}^1(\mathbb{R}).$
\begin{thm}
If $\beta \in \mathscr{B}^\star_{\mathscr{L}^1}$, then the extended Hartley transform $\mathscr{H}(\beta)\in C(\mathbb{R}).$
\end{thm}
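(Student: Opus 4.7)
The plan is to prove continuity of $\mathscr{H}(\beta)$ pointwise by exploiting the explicit local representation $\mathscr{H}(\beta)(t) = \mathcal{H}(f_k)(t)/\mathcal{C}(\delta_k)(t)$ that was already established (for each $t$) in the paragraphs following the definition of $\mathscr{H}$. The key observation is that a single $k$ works not just at one point but in a whole neighborhood, after which continuity reduces to continuity of the numerator and denominator.

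Fix $\beta = \left[\frac{f_n}{\delta_n}\right] \in \mathscr{B}^\star_{\mathscr{L}^1}$ and an arbitrary $t_0 \in \mathbb{R}$. By Lemma \ref{tendsto1}, $[\mathcal{C}(\delta_n)](t_0) \to 1$ as $n \to \infty$, so I can pick $k \in \mathbb{N}$ with $[\mathcal{C}(\delta_k)](t_0) \neq 0$ (for instance, $|[\mathcal{C}(\delta_k)](t_0) - 1| < 1/2$). Since $\delta_k \in \mathscr{L}^1(\mathbb{R})$, the function $t \mapsto [\mathcal{C}(\delta_k)](t) = \int_{-\infty}^{\infty} \delta_k(x)\cos(xt)\,dx$ is continuous on $\mathbb{R}$ by dominated convergence. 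Hence there exists an open neighborhood $U$ of $t_0$ on which $\mathcal{C}(\delta_k)$ is nonzero.

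Now, the computation preceding the theorem shows that for every $t$ at which $[\mathcal{C}(\delta_k)](t) \neq 0$, one has
\begin{equation*}
[\mathscr{H}(\beta)](t) \;=\; \frac{[\mathcal{H}(f_k)](t)}{[\mathcal{C}(\delta_k)](t)}.
\end{equation*}
In particular, this identity holds throughout $U$ with the same $k$. Since $f_k \in \mathscr{L}^1(\mathbb{R})$, the remarks in the Preliminaries give $\mathcal{H}(f_k) \in C_0(\mathbb{R}) \subset C(\mathbb{R})$, and as noted above $\mathcal{C}(\delta_k)$ is continuous and nowhere zero on $U$. Thus $\mathscr{H}(\beta)$ equals a quotient of continuous functions with nonvanishing denominator on $U$, so it is continuous at $t_0$. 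Since $t_0 \in \mathbb{R}$ was arbitrary, $\mathscr{H}(\beta) \in C(\mathbb{R})$.

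There is no serious obstacle; all the analytic content has been placed in Lemma \ref{tendsto1} and in the well-definedness discussion. The only subtlety is to pass from the pointwise choice of $k$ (which in principle depends on $t$) to a single $k$ that works in a neighborhood of $t_0$, and this is achieved by using the continuity of $\mathcal{C}(\delta_k)$ to propagate the nonvanishing from $t_0$ to a whole neighborhood.
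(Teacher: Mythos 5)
Your proof is correct, but it takes a different route from the paper's. The paper's proof is a one-line appeal to the fact that $\mathscr{H}(\beta)$ is the \emph{uniform} limit of the continuous functions $\mathcal{H}(f_n)$ on each compact subset of $\mathbb{R}$ (this uniform convergence having been asserted in the well-definedness discussion via the identity $[\mathcal{H}(f_n)](t)=\frac{[\mathcal{H}(f_k)](t)}{[\mathcal{C}(\delta_k)](t)}\,[\mathcal{C}(\delta_n)](t)$ together with Lemma \ref{tendsto1}), so continuity follows from the standard theorem on uniform limits. You instead bypass uniform convergence entirely: you freeze the closed-form local representation $\mathscr{H}(\beta)=\mathcal{H}(f_k)/\mathcal{C}(\delta_k)$ on a neighborhood $U$ of $t_0$ where the denominator is nonvanishing, and conclude by continuity of a quotient. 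Both arguments rest on the same identity from Theorem \ref{conH}, but yours is the more self-contained on one point the paper glosses over: the index $k$ in the paper's representation is chosen pointwise, and to get uniform convergence on a compact set $K$ one must first observe (via the uniform statement of Lemma \ref{tendsto1}) that a single $k$ works on all of $K$; your neighborhood argument makes the analogous localization explicit. What the paper's approach buys is brevity and the stronger by-product that the convergence $\mathcal{H}(f_n)\to\mathscr{H}(\beta)$ is locally uniform; what yours buys is a completely elementary, purely local verification that does not need any uniformity at all.
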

\begin{proof} As $\mathscr{H}(\beta)$ is the uniform limit of $\{H(f_n)\}$ on each compact subset of $\mathbb{R}$ and each $H(f_n)$ is a continuous function on $\mathbb{R}$, $\mathscr{H}(\beta)$ is a continuous function on $\mathbb{R}$. 
\end{proof}
As proving the following properties of the  Hartley transform on Boehmians is a routine exercise, as in the case of Fourier transform on integrable Boehmians \cite{mfib}, we just state them without proofs.
\begin{thm}
The Hartley transform $\mathscr{H}:\mathscr{B}^\star_{\mathscr{L}^1}\to C(\mathbb{R})$ is linear. 
\end{thm}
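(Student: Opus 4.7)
The plan is to verify linearity directly from the definition of $\mathscr{H}$, using the convolution theorem (Theorem \ref{conH}), the approximation lemma (Lemma \ref{tendsto1}), and the classical linearity of $\mathcal{H}$ on $\mathscr{L}^1(\mathbb{R})$. Two things have to be checked: additivity and scalar homogeneity.

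For scalar homogeneity, fix $c\in\mathbb{C}$ and $\beta=\left[\frac{f_n}{\delta_n}\right]\in\mathscr{B}^\star_{\mathscr{L}^1}$. By the scalar multiplication rule on the $G$-Boehmian space, $c\beta=\left[\frac{cf_n}{\delta_n}\right]$. The classical Hartley transform $\mathcal{H}$ is linear on $\mathscr{L}^1(\mathbb{R})$, so $[\mathcal{H}(cf_n)](t)=c[\mathcal{H}(f_n)](t)$ for every $t\in\mathbb{R}$ and every $n$, and passing to the limit gives $[\mathscr{H}(c\beta)](t)=c[\mathscr{H}(\beta)](t)$.

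For additivity, take $\beta=\left[\frac{f_n}{\delta_n}\right]$ and $\gamma=\left[\frac{g_n}{\psi_n}\right]$ in $\mathscr{B}^\star_{\mathscr{L}^1}$. By the addition formula on the $G$-Boehmian space,
$$\beta+\gamma=\left[\frac{f_n\#\psi_n+g_n\#\delta_n}{\delta_n\#\psi_n}\right].$$
By the definition of $\mathscr{H}$ and the linearity of $\mathcal{H}$ on $\mathscr{L}^1(\mathbb{R})$,
$$[\mathscr{H}(\beta+\gamma)](t)=\lim_{n\to\infty}\left([\mathcal{H}(f_n\#\psi_n)](t)+[\mathcal{H}(g_n\#\delta_n)](t)\right).$$
Now I apply Theorem \ref{conH} to each summand, obtaining
$$[\mathcal{H}(f_n\#\psi_n)](t)=[\mathcal{H}(f_n)](t)\cdot[\mathcal{C}(\psi_n)](t),\qquad [\mathcal{H}(g_n\#\delta_n)](t)=[\mathcal{H}(g_n)](t)\cdot[\mathcal{C}(\delta_n)](t).$$
Since $(\delta_n),(\psi_n)\in\Delta$, Lemma \ref{tendsto1} yields $[\mathcal{C}(\delta_n)](t)\to 1$ and $[\mathcal{C}(\psi_n)](t)\to 1$; meanwhile, by the definition of $\mathscr{H}$, $[\mathcal{H}(f_n)](t)\to[\mathscr{H}(\beta)](t)$ and $[\mathcal{H}(g_n)](t)\to[\mathscr{H}(\gamma)](t)$. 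Passing to the limit pointwise in $t$ gives $[\mathscr{H}(\beta+\gamma)](t)=[\mathscr{H}(\beta)](t)+[\mathscr{H}(\gamma)](t)$.

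There is no real obstacle here; the only mild care needed is that the representative $\dfrac{f_n\#\psi_n+g_n\#\delta_n}{\delta_n\#\psi_n}$ used in the computation be a legitimate quotient in $\mathscr{B}^\star_{\mathscr{L}^1}$ (so that $\mathscr{H}(\beta+\gamma)$ is well-defined by it), which is guaranteed by the general setup established earlier in the section together with Lemma \ref{deltanstarpsin}. Everything else is a direct combination of the classical convolution theorem for $\mathcal{H}$ and the approximating-identity property of sequences in $\Delta$.
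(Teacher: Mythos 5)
Your proof is correct: the paper itself omits the argument, declaring it a routine exercise in the style of Mikusi\'{n}ski's Fourier transform for integrable Boehmians, and your direct verification (scalar homogeneity from the linearity of $\mathcal{H}$ on $\mathscr{L}^1(\mathbb{R})$, additivity via the sum representative $\frac{f_n\#\psi_n+g_n\#\delta_n}{\delta_n\#\psi_n}$, Theorem \ref{conH}, and Lemma \ref{tendsto1}) is exactly the intended routine argument. You also correctly apply the convolution theorem with the mollifier in the second slot, which matters here since $\#$ is not commutative, so there is nothing to add.
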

\begin{thm}
The Hartley transform $\mathscr{H}:\mathscr{B}^\star_{\mathscr{L}^1}\to C(\mathbb{R})$ is one-to-one. 
\end{thm}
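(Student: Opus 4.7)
The plan is to prove injectivity by the standard Boehmian-transform argument: assume two Boehmians have the same Hartley transform, use the convolution theorem to transfer the equality down to equality of the representing $\mathscr{L}^1$-functions (modulo the $\#$-convolution), and then invoke the injectivity of the classical Hartley transform on $\mathscr{L}^{1}(\mathbb{R})$ to close the argument.

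Let $\beta_1=\bigl[\tfrac{f_n}{\delta_n}\bigr]$ and $\beta_2=\bigl[\tfrac{g_n}{\psi_n}\bigr]$ be elements of $\mathscr{B}^\star_{\mathscr{L}^1}$ with $\mathscr{H}(\beta_1)=\mathscr{H}(\beta_2)$. The first key step is to establish the pointwise identity
\[
[\mathcal{H}(f_n)](t) \;=\; [\mathscr{H}(\beta_1)](t)\,[\mathcal{C}(\delta_n)](t), \qquad \forall\, n\in\mathbb{N},\ \forall\, t\in\mathbb{R},
\]
and similarly $[\mathcal{H}(g_m)](t)=[\mathscr{H}(\beta_2)](t)\,[\mathcal{C}(\psi_m)](t)$. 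To prove this, fix $t\in\mathbb{R}$ and, using Lemma \ref{tendsto1}, choose $k$ with $[\mathcal{C}(\delta_k)](t)\neq 0$. The quotient condition $f_n\#\delta_k=f_k\#\delta_n$ together with the convolution theorem (Theorem \ref{conH}) gives $[\mathcal{H}(f_n)](t)\,[\mathcal{C}(\delta_k)](t)=[\mathcal{H}(f_k)](t)\,[\mathcal{C}(\delta_n)](t)$; dividing and recognising the definition of $\mathscr{H}(\beta_1)(t)$ (as already verified in the paragraph following the definition of $\mathscr{H}$) yields the claimed identity.

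Next, I apply Theorem \ref{conH} to the functions $f_n\#\psi_m$ and $g_m\#\delta_n$ in $\mathscr{L}^1(\mathbb{R})$:
\[
\mathcal{H}(f_n\#\psi_m)=\mathcal{H}(f_n)\cdot\mathcal{C}(\psi_m)=\mathscr{H}(\beta_1)\cdot\mathcal{C}(\delta_n)\cdot\mathcal{C}(\psi_m),
\]
\[
\mathcal{H}(g_m\#\delta_n)=\mathcal{H}(g_m)\cdot\mathcal{C}(\delta_n)=\mathscr{H}(\beta_2)\cdot\mathcal{C}(\psi_m)\cdot\mathcal{C}(\delta_n).
\]
Since $\mathscr{H}(\beta_1)=\mathscr{H}(\beta_2)$, the two right-hand sides agree, so $\mathcal{H}(f_n\#\psi_m)=\mathcal{H}(g_m\#\delta_n)$ for every $m,n\in\mathbb{N}$.

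The final step is to invoke the injectivity of the classical Hartley transform on $\mathscr{L}^{1}(\mathbb{R})$. This injectivity follows from the identities given in Section 2: writing $\mathcal{H}(h)(t)\pm\mathcal{H}(h)(-t)$ recovers the cosine and sine transforms of $h$, hence $\mathcal{F}(h)$, and the classical Fourier transform is injective on $\mathscr{L}^1(\mathbb{R})$. Applying this to $h=f_n\#\psi_m-g_m\#\delta_n$, which lies in $\mathscr{L}^1(\mathbb{R})$ by Lemma \ref{fce}, we conclude $f_n\#\psi_m=g_m\#\delta_n$ in $\mathscr{L}^1(\mathbb{R})$ for all $m,n$. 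By the definition of the equivalence relation $\sim$, this is exactly $\beta_1=\beta_2$. The only mildly delicate point is making sure the identity $\mathcal{H}(f_n)=\mathscr{H}(\beta_1)\cdot\mathcal{C}(\delta_n)$ holds \emph{everywhere} rather than merely on the set where some fixed $\mathcal{C}(\delta_k)$ does not vanish; this is handled by letting $k$ depend on $t$, which is permissible because the identity to be established is pointwise.
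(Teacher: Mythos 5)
Your proof is correct: the key identity $[\mathcal{H}(f_n)](t)=[\mathscr{H}(\beta_1)](t)\,[\mathcal{C}(\delta_n)](t)$ for all $t$ is justified exactly as in the paragraph following the definition of $\mathscr{H}$ (with $k$ chosen depending on $t$, which Lemma \ref{tendsto1} permits), and the reduction via Theorem \ref{conH} to the injectivity of the classical Hartley (equivalently, Fourier) transform on $\mathscr{L}^1(\mathbb{R})$ correctly yields $f_n\#\psi_m=g_m\#\delta_n$ for all $m,n$, which is precisely the relation $\sim$ defining $\beta_1=\beta_2$. The paper states this theorem without proof, deferring to the routine argument for the Fourier transform on integrable Boehmians, and your argument is exactly that standard one.
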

\begin{thm}
The Hartley transform $\mathscr{H}:\mathscr{B}^\star_{\mathscr{L}^1}\to C(\mathbb{R})$ is continuous with respect to $\delta$-convergence and $\Delta$-convergence. 
\end{thm}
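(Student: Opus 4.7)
The plan is to reduce both statements to $\mathscr{L}^1$-convergence via Theorem \ref{conH}, exploit the classical continuity estimate $\|\mathcal{H}(f)\|_\infty\le C\|f\|_1$ (recorded in the preliminaries), and use Lemma \ref{tendsto1} to invert the factor $\mathcal{C}(\delta_n)$ on compact sets. Throughout I take the topology on $C(\mathbb{R})$ to be uniform convergence on compact subsets.

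First I would record the obvious Boehmian extension of the convolution theorem: for $\gamma=\left[\tfrac{f_n}{\eta_n}\right]\in \mathscr{B}^\star_{\mathscr{L}^1}$ and $s\in \mathscr{L}^1(\mathbb{R})$, the definition of $\mathscr{H}$ together with Theorem \ref{conH} gives
$$\mathscr{H}(\gamma\#s)(t)=\lim_{n\to\infty}\mathcal{H}(f_n\#s)(t)=\Big(\lim_{n\to\infty}\mathcal{H}(f_n)(t)\Big)\mathcal{C}(s)(t)=\mathscr{H}(\gamma)(t)\mathcal{C}(s)(t),$$
for every $t\in\mathbb{R}$. I would apply this with $\gamma=\beta_m-\beta$ at the appropriate moments below.

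For $\delta$-continuity: assume $\beta_m\stackrel{\delta}{\to}\beta$, witnessed by $(\delta_n)\in\Delta$, so that $(\beta_m-\beta)\#\delta_n\in\mathscr{L}^1(\mathbb{R})$ and $(\beta_m-\beta)\#\delta_n\to 0$ in $\mathscr{L}^1(\mathbb{R})$ as $m\to\infty$ for each fixed $n$. Given a compact $K\subset\mathbb{R}$, Lemma \ref{tendsto1} supplies an $n=n(K)$ with $|\mathcal{C}(\delta_n)(t)|\ge 1/2$ on $K$. The identity above together with the consistency of $\mathscr{H}$ with $\mathcal{H}$ yields
$$\sup_{t\in K}|\mathscr{H}(\beta_m)(t)-\mathscr{H}(\beta)(t)|\le 2\,\|\mathcal{H}((\beta_m-\beta)\#\delta_n)\|_\infty\le 2C\,\|(\beta_m-\beta)\#\delta_n\|_1\longrightarrow 0.$$

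For $\Delta$-continuity: assume $\beta_m\stackrel{\Delta}{\to}\beta$ with witness $(\delta_n)\in\Delta$, so $(\beta_m-\beta)\#\delta_m\in\mathscr{L}^1(\mathbb{R})$ and $\|(\beta_m-\beta)\#\delta_m\|_1\to 0$. The Boehmian convolution identity gives
$$\bigl(\mathscr{H}(\beta_m)(t)-\mathscr{H}(\beta)(t)\bigr)\mathcal{C}(\delta_m)(t)=\mathcal{H}\bigl((\beta_m-\beta)\#\delta_m\bigr)(t),$$
whose right-hand side is $O\bigl(\|(\beta_m-\beta)\#\delta_m\|_1\bigr)$ uniformly in $t\in\mathbb{R}$. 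For a compact $K$, Lemma \ref{tendsto1} produces $N$ with $|\mathcal{C}(\delta_m)(t)|\ge 1/2$ for all $t\in K$ and $m\ge N$; dividing finishes the proof.

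The only mildly delicate point is the uniform lower bound on $|\mathcal{C}(\delta_n)|$ over a compact set: that is exactly what Lemma \ref{tendsto1} delivers (since $\mathcal{C}(\delta_n)\to 1$ uniformly on $K$), so no genuine obstacle arises.
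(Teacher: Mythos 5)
Your argument is correct. Note that the paper itself gives no proof of this theorem: it is stated without proof as "a routine exercise, as in the case of Fourier transform on integrable Boehmians." Your proposal is precisely the intended routine argument — reduce to $\mathscr{L}^1$-convergence of $(\beta_m-\beta)\#\delta_n$ via the convolution theorem (Theorem \ref{conH}), use the bound $\|\mathcal{H}(f)\|_\infty\le \|f\|_1$, and divide by $\mathcal{C}(\delta_n)$ using the uniform lower bound on compact sets supplied by Lemma \ref{tendsto1} — so it correctly fills the gap the authors left to the reader.
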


\section{Concluding remarks}
As a highlight of this work, we mention that the notion of Boehmian space is generalized by $G$-Boehmian space, and an example of a $G$-Boehmian space $\mathscr{B}^\star_{\mathscr{L}^1}$  which is not a Boehmian space is constructed.  

As there are a few works on Hartley transform (and generalized Hartley transform) on Boehmian spaces \cite{lbd,alo11, alo13, alo12}, it is necessary to compare the present work with the existing works on Hartley transform for Boehmians. However, each of these papers is having some major shortcomings, and hence the present work could not be comparable with the existing works. The following is the brief information about the papers \cite{lbd,alo11, alo13, alo12}.

According to \cite{lbd}, the Hartley transform of an integrable Boehmian $[f_n/\delta_n]$ is the $\lim\limits_{n\to \infty} H(f_n)$. While proving the existence of this limit (in \cite[Lemma 1]{lbd}), the identity $H(f_n\ast \delta_k)= H(f_n)H(\delta_k)$ was used, which is not valid, as $\ast$ is the usual convolution defined by $(f\ast g)(x)=\int\limits_{\mathbb{R}} f(x-y)g(y)\, dy$.

The Hartley transform defined in \cite{alo11} is also not correct for the following reason. The $C^\infty$-Boehmian space $\mathscr{B}(C^\infty (\mathbb{R}^n), (\mathscr{D}(\mathbb{R}^n), \ast), \ast, \Delta_0)$ introduced in \cite{mcb} was denoted by  $\mathbb{M}(\mathbb{E};\mathbb{D};\Delta;\bullet)$ in \cite{alo11}.  As per \cite[Equation (14)]{alo11}, the Hartley transform of $\left[\frac{f_n}{\gamma_n}\right]\in \mathbb{M}(\mathbb{E};\mathbb{D};\Delta;\bullet)$ was defined by $H\left[\frac{f_n}{\gamma_n}\right]=\left[\frac{Hf_n}{H\gamma_n}\right]$. The blunder in this definition is that as $f_n\in \mathbb{E}=$ the space of all infinitely differentiable functions on $\mathbb{R}$, as the Hartley transform is not defined for all functions in $\mathbb{E}$, there is no justification for the existence of $Hf_n$, in the numerator of the right hand side.

In \cite{alo13}, a unified generalization of the Fourier and Hartley transforms is given by
$$F_a^b(f)(\xi)=\frac{1}{\sqrt{2\pi}} \int\limits_{\mathbb{R}} f(y) (a\cos(y\xi)+b\sin(y\xi))\, dy,\ \forall \xi\in \mathbb{R}.$$
In \cite{alo13}, the Hartley transform was extended to compactly supported distributions and then it was attempted to extend on compactly supported Boehmians. Unfortunately, the proof of the convolution theorem for the Hartley transform in \cite[Theorem 6]{alo13} is not correct for the following reasons. Let us first recall the definition of a convolution $\vee$ defined in \cite[Equation 20]{alo13}.
\begin{eqnarray}
f\vee g&=&fg+\hat{F}_a^b(f_1) F_c(g_1)\\
\label{ct2} 
&&\mbox{ where } \hat{F}_a^b(f_1)=f, F_c(g_1)=g \mbox{ for some }f_1\in E^\prime, \ g_1\in D. \nonumber
\end{eqnarray}

There are many conceptual errors in the discussions on the properties of this convolution in \cite{alo13}. First of all, using \cite[Theorem 5]{alo13} in the proof of  \cite[Theorem 6]{alo13} is not possible, as in \cite[Theorem 6]{alo13} $f$, $g$ are assumed to be compactly supported distribution and  compactly supported function, respectively,  whereas both are assumed to be integrable functions in \cite[Theorem 5]{alo13}. Moreover, the following one line proof of \cite[Theorem 6]{alo13}
$$
f\vee g=fg+\hat{F}_{-a}^b(f_1) F_s(g_1)=\hat{F}_a^b(f_1) F_c(g_1)+\hat{F}_{-a}^b(f_1) F_s(g_1)=\hat{F}_a^b(f\ast g)
$$
is valid only after 
\begin{enumerate}
\item identifying that ${F}_a^b$ and $\hat{F}_a^b$ are same,
\item correcting the definition of $f\vee g$ as $fg+\hat{F}_{-a}^b(f_1) F_s(g_1)$,
\item replacing $\hat{F}_a^b(f\ast g)$ by $\hat{F}_a^b(f_1\ast g_1)$.
\end{enumerate} 
The major mistakes in \cite{alo13} are: 
\begin{itemize}
\item In the proof of \cite[Theorem 7]{alo13},  $\hat{F}_a^b(f_1\ast(g_1\ast h_1))=f\vee \hat{F}_a^b(g_1\ast h_1)$ is not correct since as per the convolution theorem, the right hand side should be $f\vee F_c(g_1\ast h_1)$.
\item \cite[Theorem 9]{alo13} is as follows. $``${\em Let $g_1,h_1\in D$ with $F_c(g_1)=g, F_c(h_1)=h\in G_a^b$, then $g\vee h=h\vee g$.}" While $\vee$ is defined as a function on $G_a^b\times G_c$ (see \cite[Equation 20]{alo13}), proving $g\vee h=h\vee g$, for $g,h\in G_a^b$  in \cite[Theorem 9]{alo13} is not meaningful. One more mistake in this theorem is that $G_a^b$ is typed in stead of $G_c$. Even if it is corrected by replacing $G_a^b$ by $G_c$, as the convolution is not defined on $G_c\times G_c$, the statement is not meaningful.
\end{itemize}

In \cite{alo12}, one more attempt is made to extend the Hartley transform to the context of Boehmians on $L^p$-Boehmians. The first mistake is that Hartley transform is not defined on $L^p$, for an arbitrary $p>1$. So it is necessary to assume that $p=1$. As stated in the review Zbl 1261.46028, it is a replica of the definition given in \cite{lbd} and the proofs of theorems concerning the Hartley transform for integrable Boehmians are not proper and remain unexplained.

\end{document}